\newtheorem{theorem}{Theorem}[section]
\theoremstyle{definition}
\newtheorem{definition}{Definition}[section]
\theoremstyle{remark}
\newtheorem{remark}{Remark}[section]
\theoremstyle{proposition}
\numberwithin{equation}{section}
\newtheorem{example}{Example}[section]
\theoremstyle{corollary}
\newtheorem{corollary}{Corollary}[section]
\theoremstyle{conjecture}
\begin{document}

\title{On some Chern-Simons forms of the Bott-Shulman-Stasheff forms}
\author{Naoya Suzuki}
\date{}
\maketitle
\begin{abstract}
We exhibit the Chern-Simos forms of some characteristic classes in the simplicial de Rham complex.
\end{abstract}

\footnote[0]{
2010 Mathematics Subject Classification. 53-XX.

~~Key words and phrases. Simplicial manifold.

}

\section{Introduction}
\setcounter{equation}{0}
%

In the framework of differential geometry on the simplicial manifold,  the 
author exhibited some cocycles  in ${\Omega}^{*} (NG(*)) $ which represent classical characteristic classes of the universal bundle $EG \rightarrow BG$\cite{Suz}\cite{Suz2} on the basis of Dupont's work\cite{Dup}. Here $NG$ is a simplicial manifold called nerve of $G$ and it is well-known that the cohomology ring of
${\Omega}^{*} (NG(*)) $ is isomorphic to $H^*(BG)$ for any Lie group $G$. These cocycles in ${\Omega}^{*} (NG(*)) $ are called the Bott-Shulman-Stasheff forms.

On the other hand, there is a simplicial manifold $PG$ which play the role of $EG$.
Since $H^*(EG)$ is trivial, any cocycle in $\Omega^*(PG)$ is exact. So if we pullback the Bott-Shulman-Stasheff form
to  $\Omega^*(PG)$, there exists a cochain  $\Omega^{*-1}(PG)$ which hits the cocycle by a coboundary operator.
These forms can be called the Chen-Simons forms of the BSS forms.

In this paper, we exhibit the Chen-Simons forms of the BSS forms which represent some classical characteristic classes.

\section{Review of the BSS forms}

In this section we recall the universal Chern-Weil theory following \cite{Dup2}.
For any Lie group $G$, we define simplicial manifolds $NG$, $PG$ and simplicial $G$-bundle  $\gamma : PG \rightarrow NG$
as follows:\\
\par
$NG(q)  = \overbrace{G \times \cdots \times G }^{q-times}  \ni (h_1 , \cdots , h_q ) :$  \\
face operators \enspace ${\varepsilon}_{i} : NG(q) \rightarrow NG(q-1)  $
$$
{\varepsilon}_{i}(h_1 , \cdots , h_q )=\begin{cases}
(h_2 , \cdots , h_q )  &  i=0 \\
(h_1 , \cdots ,h_i h_{i+1} , \cdots , h_q )  &  i=1 , \cdots , q-1 \\
(h_1 , \cdots , h_{q-1} )  &  i=q.
\end{cases}
$$

\par
\medskip
$PG (q) = \overbrace{ G \times \cdots \times G }^{q+1 - times} \ni (g_0 , \cdots , g_{q} ) :$ \\
face operators \enspace $ \bar{\varepsilon}_{i} :PG(q) \rightarrow PG(q-1)  $ 
$$ \bar{{\varepsilon}} _{i} (g_0 , \cdots , g_{q} ) = (g_0 , \cdots , g_{i-1} , g_{i+1}, \cdots , g_{q})  \qquad i=0,1, \cdots ,q. $$

\par
\medskip

We define $\gamma : PG \rightarrow NG $ as $ \gamma (g_0 , \cdots , g_{q} ) = (g_0 {g_1}^{-1} , \cdots , g_{q-1} {g_{q}}^{-1} )$ then $\parallel \gamma \parallel$ is a model of the universal bundle $EG \rightarrow BG$  \cite{Seg}. 

There is a double complex associated to a simplicial manifold.

\begin{definition}
For any simplicial manifold $ \{ X_* \}$ with face operators $\{ {\varepsilon}_* \}$, we define a double complex as follows:
$${\Omega}^{p,q} (X) := {\Omega}^{q} (X_p). $$
Derivatives are:
$$ d' := \sum _{i=0} ^{p+1} (-1)^{i} {\varepsilon}_{i} ^{*}  , \qquad  d'' := (-1)^{p} \times {\rm the \enspace exterior \enspace differential \enspace on \enspace }{ \Omega ^*(X_p) }. $$
\end{definition}
\bigskip

For $NG$ and $PG$ the following holds \cite{Bot2} \cite{Dup2} \cite{Mos}.

\begin{theorem}
There exist ring isomorphisms 
$$ H({\Omega}^{*} (NG))  \cong  H^{*} (BG ), \qquad  H({\Omega}^{*} (PG)) \cong H^{*} (EG ). $$
Here ${\Omega}^{*} (NG)$  and  ${\Omega}^{*} (PG)$  mean the total complexes.
\end{theorem}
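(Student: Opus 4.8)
The plan is to prove the single statement $H(\Omega^{*}(X)) \cong H^{*}(\|X\|;\mathbb{R})$ for the total complex of the double complex $\Omega^{*,*}(X)$ of any simplicial manifold $X$, where $\|X\|$ denotes the fat geometric realization, and then to specialize to $X=NG$ and $X=PG$. Indeed, by Segal's result quoted above, $\|NG\|$ is a model of $BG$ and $\|PG\|$ is a model of $EG$; since $EG$ is contractible this gives $H^{*}(\|PG\|;\mathbb{R}) \cong \mathbb{R}$ concentrated in degree $0$, matching $H^{*}(EG)$. Thus both asserted isomorphisms follow from the one comparison theorem, which is the classical content of \cite{Bot2}\cite{Dup2}\cite{Mos}.

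First I would introduce Dupont's complex $A^{*}(X_{*})$ of simplicial differential forms: an element of degree $n$ is a compatible family $\{\phi^{(p)}\}$, where $\phi^{(p)}$ is an $n$-form on $\Delta^{p}\times X_{p}$, the compatibility being imposed along the coface inclusions of the simplices together with the face operators $\varepsilon_{i}$. Applying the ordinary de Rham theorem fibrewise over $\|X\|$ yields Dupont's simplicial de Rham theorem $H^{*}(A^{*}(X_{*})) \cong H^{*}(\|X\|;\mathbb{R})$. This reduces the problem to comparing $A^{*}(X_{*})$ with the total complex $\Omega^{*}(X)$.

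The comparison is effected by fibre integration over the simplex, $\mathcal{I}=\int_{\Delta^{p}}\colon A^{*}(X_{*}) \to \mathrm{Tot}\,\Omega^{*,*}(X)$. I would first check that $\mathcal{I}$ is a cochain map for the total differential $d'+d''$ by Stokes' theorem on $\Delta^{p}$: the exterior derivative on $\Delta^{p}\times X_{p}$ splits into the $X_{p}$-direction, which produces $d''$ once the sign $(-1)^{p}$ is accounted for, and the $\Delta^{p}$-direction, whose integral becomes a boundary term over $\partial\Delta^{p}$ and reassembles into the alternating sum $d'=\sum_{i}(-1)^{i}\varepsilon_{i}^{*}$. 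The key step, and the main obstacle, is to show $\mathcal{I}$ is a quasi-isomorphism. I would do this by filtering both complexes by the simplicial degree $p$ and comparing the resulting first-quadrant spectral sequences. Taking $d''$-cohomology first and applying the ordinary de Rham theorem to each manifold $X_{p}$ (a finite power of $G$) identifies the $E_{1}$-page of $\mathrm{Tot}\,\Omega^{*,*}(X)$ with $E_{1}^{p,q}=H^{q}(X_{p};\mathbb{R})$, with $d_{1}$ the alternating sum of face pullbacks; this is exactly the $E_{1}$-page of the skeletal spectral sequence of $\|X\|$ that $A^{*}(X_{*})$ computes. Verifying that $\mathcal{I}$ induces the identity on these $E_{1}$-pages, for which Dupont's explicit homotopy operator on $\Delta^{p}\times X_{p}$ (a fibrewise Poincar\'e lemma) is the essential tool, forces $\mathcal{I}$ to be an isomorphism on every later page and hence on the abutment.

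Finally, for the ring structure I would equip $A^{*}(X_{*})$ with the fibrewise wedge product, which is graded-commutative and induces the cup product on $H^{*}(\|X\|;\mathbb{R})$. Transporting this product across $\mathcal{I}$ gives the wedge-shuffle product on $\mathrm{Tot}\,\Omega^{*,*}(X)$, and since $\mathcal{I}$ is a product-preserving quasi-isomorphism up to homotopy, the induced map on cohomology is a ring isomorphism. Specializing to $NG$ and $PG$ then yields the two stated ring isomorphisms.
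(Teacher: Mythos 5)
The paper states this theorem without proof, merely citing \cite{Bot2}, \cite{Dup2} and \cite{Mos}, and your outline is a correct reconstruction of precisely that standard argument: it uses exactly the machinery (Dupont's complex $A^{*,*}(X)$, the integration map $I_{\Delta}$, and Segal's identification of $\parallel \gamma \parallel$ with $EG \rightarrow BG$) that the paper itself introduces in Definition 2.2, Theorem 2.2 and the surrounding text. The only nitpick is terminological: the product transported to the total complex of ${\Omega}^{*,*}(X)$ is the Alexander--Whitney-type front-face/back-face cup product written out explicitly in Remark 2.1, not a shuffle product, though this does not affect the validity of your argument.
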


There is another double complex associated to a simplicial manifold.
\begin{definition}[\cite{Dup}]
A simplicial $n$-form on a simplicial manifold $ \{ {X}_{p} \} $ is a sequence $ \{ {\phi}^{(p)} \}$
of $n$-forms ${\phi}^{(p)}$ on ${\Delta}^{p} \times {X}_{p} $ such that
$${({\varepsilon}^{i} \times id )}^{*} {\phi}^{(p)} = {(id \times {\varepsilon}_{i} )}^{*} {\phi}^{(p-1)}. $$
Here ${\varepsilon}^{i}$ is the canonical $i$-th face operator of ${\Delta}^{p}$.\\
\end{definition}

{\rm Let}  \thinspace $A^{k,l} (X)$ be the set of all simplicial $(k+l)$-forms on ${\Delta}^{p} \times {X}_{p} $ which are expressed locally 
of the form
$$ \sum { a_{ i_1 \cdots i_k j_1 \cdots j_l } (dt_{i_1 } \wedge \cdots \wedge dt_{i_k } \wedge dx_{j_1 } \wedge \cdots \wedge dx_{j_l })}$$
where $(t_0, t_1, \cdots, t_p)$ are the barycentric coordinates in ${\Delta}^{p}  $ and $x_j $ are the local coordinates in $ {X}_{p} $.
We define its derivatives as:
$$ d' := {\rm  the \enspace exterior \enspace differential \enspace on \enspace } {\Delta}^{p},  $$
$$ d'' := (-1)^{k} \times {\rm  the \enspace exterior \enspace differential \enspace on \enspace } {X_p }. $$
Then $(A^{k,l} (X) , d' , d'' )$ is a double complex.\\

\begin{theorem}[\cite{Dup}]
Let $A^{*} (X)$ denote the total complex of $A^{*,*}(X)$. A map $ I_{ \Delta }( \alpha ) :=  \int_{{\Delta }^{p}} ( { \alpha } |_{{ \Delta }^{p} \times {X}_{p} } )$
{ induces a natural ring isomorphism} $ I_{ \Delta } ^{*} : H( A^{*} (X)) \cong H({\Omega}^{*} (X))$.

\end{theorem}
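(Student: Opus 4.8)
The plan is to exhibit $I_\Delta$ as a morphism of complexes, produce an explicit right inverse by a Whitney-type prolongation, and then collapse the remaining difference by an explicit fiber homotopy over the simplex; the isomorphism on cohomology follows, and the ring structure is handled separately up to homotopy.

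First I would verify that $I_\Delta$ is a chain map, i.e. $I_\Delta \circ (d' + d'') = D \circ I_\Delta$ where $D = d' + d''$ denotes the total differential on $\Omega^*(X)$. The $d''$ part is immediate, since fiber integration over $\Delta^p$ commutes with the exterior differential on $X_p$ up to the sign $(-1)^p$ already built into the two definitions. The content lies in the $d'$ part: for a simplicial form $\alpha = \{\phi^{(p)}\}$, Stokes' theorem on the simplex gives $\int_{\Delta^p} d_{\Delta^p}\phi^{(p)} = \int_{\partial \Delta^p}\phi^{(p)} = \sum_i (-1)^i \int_{\Delta^{p-1}} (\varepsilon^i \times \mathrm{id})^* \phi^{(p)}$, and the simplicial compatibility $(\varepsilon^i \times \mathrm{id})^*\phi^{(p)} = (\mathrm{id} \times \varepsilon_i)^*\phi^{(p-1)}$ converts each simplex-face restriction into the pullback $\varepsilon_i^*$ on $X$. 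Summing reproduces exactly $d' = \sum_i (-1)^i \varepsilon_i^*$ on $\Omega^*(X)$. A degree count shows $I_\Delta$ is nonzero only on components of $dt$-degree equal to $p$, so it lands in $\Omega^l(X_p) = \Omega^{p,l}(X)$, as required.

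Next I would construct a prolongation $E : \Omega^*(X) \to A^*(X)$ serving as a right inverse on cohomology. Given a form on $X_p$, one spreads it over $\Delta^p \times X_p$ using the barycentric coordinates $t_0, \dots, t_p$ and the elementary Whitney forms $\sum_j (-1)^j t_{i_j}\, dt_{i_0}\wedge \cdots \widehat{dt_{i_j}} \cdots \wedge dt_{i_k}$, arranged so that the compatibility condition in the definition of a simplicial form holds. The identity $I_\Delta \circ E = \mathrm{id}$ is then a normalization computation: integrating the top Whitney form over $\Delta^p$ yields $1$ up to the standard factorial factor, so fiber integration undoes the prolongation. The crux, and the step I expect to be the main obstacle, is producing a cochain homotopy $s$ on $A^*(X)$ with $d s + s d = \mathrm{id} - E \circ I_\Delta$, i.e. showing that prolongation-after-integration is chain homotopic to the identity. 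I would build $s$ from a fiberwise contraction of the simplex combined with integration along the scaling direction, essentially a relative Poincaré homotopy in the $\Delta^p$ variables applied in each simplicial degree $p$. The delicate points are (i) checking that $s$ preserves the simplicial compatibility conditions, so that it is a well-defined operator on $A^*(X)$ rather than merely on each $\Delta^p \times X_p$ separately, and (ii) accounting for the boundary contributions along $\partial \Delta^p$, which must cancel against the $E \circ I_\Delta$ term. Once this homotopy identity is established, $I_\Delta^*$ is an isomorphism on cohomology with inverse $E^*$.

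Finally, for the ring structure I would note that $I_\Delta$ is not strictly multiplicative, since the wedge product on $A^*(X)$ and the product on the double complex $\Omega^*(X)$ differ by the Eilenberg–Zilber shuffle. I would show instead that $I_\Delta(\alpha \wedge \beta)$ and $I_\Delta(\alpha)\cdot I_\Delta(\beta)$ differ by a coboundary, using a Fubini argument over $\Delta^p \times \Delta^p$ together with the shuffle decomposition of $\Delta^p \times \Delta^p$ into top-dimensional simplices. This yields multiplicativity of $I_\Delta^*$ on cohomology and completes the proof that it is a natural ring isomorphism.
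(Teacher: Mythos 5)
The paper gives no proof of this statement---it is quoted verbatim from Dupont \cite{Dup}---so there is nothing internal to compare against; your outline (Stokes' theorem on $\Delta^p$ for the chain-map property, the Whitney elementary-forms prolongation $E$ as a right inverse with the $p!$ normalization, an explicit homotopy collapsing $E\circ I_{\Delta}$ to the identity, and multiplicativity only up to coboundary because of the Alexander--Whitney/shuffle discrepancy) is precisely the argument of the cited source. The one step you flag but do not carry out, the homotopy operator $s$ compatible with the simplicial identities, is exactly where the real work lies, and Dupont constructs it explicitly from the cone contraction of $\Delta^p$ onto a vertex; as an outline your proposal is correct and faithful to that proof.
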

\bigskip

Let  $\mathcal{G}$ denote the Lie algebra of $G$. A connection on a simplicial $G$-bundle $\pi : \{ E_p \} \rightarrow \{ M_p \} $ is a sequence of $1$-forms $\{ \theta \}$ on $\{ E_p \}$ with coefficients $\mathcal{G}$
such that $\theta $ restricted  to ${\Delta}^{p} \times {E}_{p} $ is a usual connection form on a principal $G$-bundle  ${\Delta}^{p} \times {E}_{p} \rightarrow {\Delta}^{p} \times {M}_{p} $.
There is a canonical connection $\theta \in A^1 (PG)$ on ${\gamma} : PG \rightarrow NG $ defined as $ {\theta } |_{{ \Delta }^{p} \times N \bar{G} (p)} := t_0 {\theta }_0 + \cdots + t_{p} {\theta }_{p}$.
Here ${\theta }_i $ is defined by ${\theta }_i = {\rm pr}_i ^{*} \bar {\theta } $ where ${\rm pr}_i : { \Delta }^{p} \times PG (p) \rightarrow G $ is the projection into the $i$-th factor of $ PG (p) $ and $\bar {\theta }$ is the Maurer-Cartan form of $G$.
We also obtain its curvature $\Omega \in A^2 (PG)$ on ${\gamma }$ as: $ \Omega |_{{ \Delta }^{p} \times PG (p) }= d \theta |_{{ \Delta }^{p} \times PG (p) }
+  \frac{1}{2} [ \theta |_{{ \Delta }^{p} \times PG (p) } , \theta |_{{ \Delta }^{p} \times PG (p) } ]$.

Let  ${\rm  I}^{*} (G)$ denote the ring of $G$-invariant polynomials on $\mathcal{G}$. For $P \in I^* (G)$, we restrict 
$P( \Omega ) \in A^{*} (PG)$ to each ${\Delta}^{p} \times PG (p) 
\rightarrow {\Delta}^{p} \times NG(p) $ and apply the usual Chern-Weil theory then we have a simplicial $2k$-form  $P( \Omega )$
on $NG$.

 Now we have the universal Chern-Weil homomorphism ${w}:{\rm  I}^{*} (G) \rightarrow H({\Omega}^{*} (NG)) $
which maps $P \in I^* (G)$  to ${w}(P)=[I_{\Delta } ( P({\Omega}) )]$. The images of this homomorphism in ${\Omega}^{*} (NG(*)) $ are called the Bott-Shulman-Stasheff(BSS) forms.

\begin{example}
In the case that $G=U(n)$, the BSS form which represents the $p$-th power of the first Chern class $c_1^p~(={\rm ch}_1^p) \in H^{2p}(BU(n))$ is given as follows:
$$ \left( \frac{1}{2 \pi i } \right)^p (-1)^{\frac{p(p-1)}{2}}{\rm tr }(h_1^{-1} dh_1 ){\rm tr }(h_2^{-1} dh_2 )\cdots{\rm tr }(h_p^{-1} dh_p ) \in \Omega^p(NG(p)). $$

\end{example}

\begin{remark}
The cocycle in the example 2.1 is given as the image of $\displaystyle ({\rm tr}\left( \frac{-X}{2\pi i}  \right))^p $ under the universal Chern-Weil homomorphism. 
On the other hand, there is a product $\cup$ on ${\Omega}^{*} (NG)$ defined as follows:
$$\alpha \cup \beta = (-1)^{qr} {\varepsilon}_{s+t}^* \cdots {\varepsilon}_{s+1}^* \alpha \wedge {\varepsilon}_{0}^* \cdots {\varepsilon}_{0}^* \beta,~~~~(\alpha \in {\Omega}^{q} (NG(s)), \beta \in {\Omega}^{r} (NG(t))).$$
We can see $c_1 \cup \cdots \cup c_1$ coincides with the cocycle in this example as a cochain.
\end{remark}

\begin{example}
In the case that $G=U(n)$, the BSS form which represents the $3$rd  Chern class ${\rm c}_3~(=2{\rm ch}_3-{\rm ch}_2{\rm ch}_1+\frac{1}{6}
{\rm ch}_1^3) \in H^6(BU(n))$ in $ \Omega ^{6} (NG) $ is the sum of the following
$c_{1,5} , c_{2,4}$ and $c_{3,3}$:
$$
\begin{CD}
0 \\
@AA{d''}A \\
{ c_{1,5} \in {\Omega}^{5} (G )}@>{d'}>>{\Omega}^{5} (NG(2))\\
@.@AA{d''}A\\
@.{ c_{2,4} \in {\Omega}^{4} (NG(2))}@>{d'}>> {\Omega}^{4} (NG(3)) \\
@.@.@AA{d''}A\\
@.@.{ c_{3,3} \in {\Omega}^{3} (NG(3))}@>{d'}>> 0
\end{CD}
$$

$$c_{1,5} = \frac{1}{6} \left( \frac{1}{2 \pi i} \right) ^3 \frac{1 }{5}{\rm tr}({h^{-1}dh} )^5,  \hspace{18em} $$

$$    c_{2,4}= \frac{-1}{6} \left( \frac{1}{2 \pi i} \right) ^3 (
{\rm tr}  (   dh_1 {h_1}^{-1}dh_1 {h_1}^{-1}dh_1 dh_2 {h_2}^{-1}{h_1}^{-1})  \hspace{7em}$$
 $$ + \frac{1 }{2}  {\rm tr}  ( dh_1 dh_2 {h_2}^{-1}{h_1}^{-1}dh_1 dh_2 {h_2}^{-1}{h_1}^{-1}) $$
$$\hspace{9em} +  {\rm tr}   ( dh_1 dh_2 {h_2}^{-1}dh_2 {h_2}^{-1}dh_2 {h_2}^{-1}{h_1}^{-1}))  $$
$$-\frac{1}{12} \left( \frac{1}{2 \pi i} \right) ^3({\rm tr} ({h_2}^{-1}dh_2 {h_2}^{-1}dh_2{h_2}^{-1}dh_2+          dh_1dh_2h_2^{-1}dh_2h_2^{-1}h_1^{-1}$$
$$  + dh_1h_1^{-1}dh_1dh_2h_2^{-1}h_1^{-1}){\rm tr} (h_1^{-1}dh_1)$$
$$-{\rm tr}(h_1^{-1}dh_1h_1^{-1}dh_1h_1^{-1}dh_1+dh_1h_1^{-1}dh_1dh_2h_2^{-1}h_1^{-1}$$
$$+dh_1dh_2h_2^{-1}dh_2h_2^{-1}h_1^{-1}) {\rm tr} (h_2^{-1}dh_2)),$$

$$  c_{3,3} =\frac{-1}{6} \left( \frac{1}{2 \pi i} \right) ^3 ({\rm tr} ( dh_1 dh_2 dh_3 {h_3}^{-1}{h_2}^{-1}{h_1}^{-1} ) \hspace{10em}$$
$$  \hspace{10em} - {\rm tr} (dh_1 h_2 dh_3 {h_3}^{-1}{h_2}^{-1}  dh_2 {h_2}^{-1}{h_1}^{-1}))$$
$$+\frac{1}{6} \left( \frac{1}{2 \pi i} \right) ^3({\rm tr} (dh_1dh_2h_2^{-1}h_1^{-1}){\rm tr} (h_3^{-1}dh_3)
+{\rm tr}(dh_2dh_3h_3^{-1}h_2^{-1}){\rm tr} (h_1^{-1}dh_1)$$
$$-{\rm tr}(dh_1h_2dh_3h_3^{-1}h_2^{-1}h_1^{-1}){\rm tr} (h_2^{-1}dh_2))$$

$$-\frac{1}{6} \left( \frac{1}{2 \pi i} \right) ^3{\rm tr} (h_1^{-1}dh_1){\rm tr} (h_2^{-1}dh_2){\rm tr} (h_3^{-1}dh_3). $$

\end{example}

\begin{remark}
The cocycle in the example 2.2 is given as the image of 
$ \frac{1}{3}{\rm tr}\left( \left( \frac{-X}{2\pi i} \right)^3 \right) -\frac{1}{2}{\rm tr}\left( \left( \frac{-X}{2\pi i} \right)^2 \right){\rm tr}\left(  \frac{-X}{2\pi i} \right)+\frac{1}{6}
({\rm tr}\left(  \frac{-X}{2\pi i} \right))^3$ under the universal Chern-Weil homomorphism.
Unfortunately, the cocycle $2{\rm ch}_3-{\rm ch}_2 \cup {\rm ch}_1+\frac{1}{6}
{\rm ch}_1 \cup {\rm ch}_1 \cup {\rm ch}_1 \in {\Omega}^{6} (NG)$ does not coincide with the one in this example as a cochain.

\end{remark}
\bigskip


\section{The Chern-Simons forms of the BSS forms}

Since $H^*(EG)$ is trivial, any cocycle in $\Omega^*(PG)$ is exact. So if we pullback the Bott-Shulman-Stasheff form
to  $\Omega^*(PG)$, there exists a cochain  $\Omega^{*-1}(PG)$ which hits the cocycle by a coboundary operator.
These forms can be called the Chen-Simons forms of the BSS forms.

In this section, we exhibit the Chen-Simons forms of the BSS forms which represent some classical characteristic classes.

\subsection{The Chern-Simons form of the universal torus bundle}


In this subsection we exhibit the Chern-Simons form of the Chern characters and Chern classes of the universal torus bundle.

We write the $s$-th factor of $PT^n(p)$ as: 
$$g_s=
\begin{pmatrix}
{\rm exp}(i\theta^s_1) & \dots &0\\
& \ddots &\\
0 & \dots &{\rm exp}(i\theta^s_n)

\end{pmatrix}~\in T^n,~~(s=0, \cdots,p).$$

\begin{theorem}[\cite{Suz3}]
The cocycle $\bar{\omega}_{p}$ in $ \Omega ^{p} (PT^n(p)) $ which corresponds to the $p$-th Chern character of the universal torus bundle is given as follows:
$$\bar{\omega}_{p}= (-1)^{\frac{p(p-1)}{2} } \frac{1}{p! } \left( \frac{1}{2 \pi  } \right)^p   \sum _{k=1} ^n \left( \prod_{s=0} ^{p-1} \left(d \theta^s_k - d \theta^{s+1}_k \right) \right).$$
\end{theorem}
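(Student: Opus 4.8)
The plan is to compute $\bar{\omega}_{p}$ head-on as $I_{\Delta}\big({\rm ch}_p(\Omega)\big)$, where $\Omega$ is the curvature of the canonical connection $\theta$ on $\gamma : PT^n \to NT^n$ from Section 2, and ${\rm ch}_p$ is the invariant polynomial $\frac{1}{p!}{\rm tr}\big((\frac{-X}{2\pi i})^p\big)$ in the normalization fixed by Remarks 2.1 and 2.2. The decisive simplification is that $T^n$ is abelian: the bracket term in the curvature vanishes, so on each $\Delta^p \times PT^n(p)$ we simply have $\Omega = d\theta$, and everything stays diagonal.

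First I would write the connection in the given coordinates. Since $g_s^{-1}dg_s = {\rm diag}(i\,d\theta^s_1, \dots, i\,d\theta^s_n)$, the connection $\theta = \sum_{s=0}^{p} t_s \theta_s$ is diagonal with $k$-th entry $i\sum_{s} t_s\,d\theta^s_k$. Its exterior derivative is diagonal with $k$-th entry $\Omega^{(k)} = i\sum_{s=0}^{p} dt_s \wedge d\theta^s_k$, the terms $t_s\,d(d\theta^s_k)$ dropping out. Hence $\frac{-\Omega^{(k)}}{2\pi i} = \frac{-1}{2\pi}\sum_{s} dt_s \wedge d\theta^s_k$, and since the curvature is diagonal the trace splits as
$${\rm ch}_p(\Omega) = \frac{1}{p!}\left( \frac{-1}{2\pi} \right)^p \sum_{k=1}^{n} \left( \sum_{s=0}^{p} dt_s \wedge d\theta^s_k \right)^{p}.$$

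Next I would expand the $p$-th power. Each summand $dt_s \wedge d\theta^s_k$ is a $2$-form, so the factors commute and square to zero; only $p$-element index subsets of $\{0,\dots,p\}$ survive, each with multiplicity $p!$. Using that separating $p$ interleaved pairs into $(dt \wedge \cdots \wedge dt)\wedge(d\theta \wedge \cdots \wedge d\theta)$ costs the sign $(-1)^{p(p-1)/2}$, I would then apply $I_{\Delta}$, i.e. integrate the $dt$-part over $\Delta^p$. Here I would use $\int_{\Delta^p} dt_0 \wedge \cdots \wedge \widehat{dt_j} \wedge \cdots \wedge dt_p = (-1)^j/p!$, which follows from $\sum_i dt_i = 0$ on the simplex together with the normalization $\int_{\Delta^p} dt_1 \wedge \cdots \wedge dt_p = 1/p!$ and a fixed orientation.

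The last step is the combinatorial identity
$$\sum_{j=0}^{p} (-1)^j\, d\theta^0_k \wedge \cdots \wedge \widehat{d\theta^j_k} \wedge \cdots \wedge d\theta^p_k = (-1)^p \prod_{s=0}^{p-1}\left( d\theta^s_k - d\theta^{s+1}_k \right),$$
which I would verify by expanding the telescoping product: each factor contributes one of two adjacent indices, a nonvanishing monomial must omit exactly one index $j$, and the adjacency structure forces the unique surviving sign $(-1)^{p-j}$. Substituting this and collecting the three sign sources — $(-1)^{p(p-1)/2}$ from the rearrangement, $(-1)^p$ from the identity, and the $(-1)^p$ hidden in $(\frac{-1}{2\pi})^p$, the latter two cancelling — yields exactly the stated formula. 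I expect the only genuine obstacle to be the disciplined bookkeeping of these three signs simultaneously; each is elementary, but they must be tracked against one fixed orientation of $\Delta^p$, and it is their conspiracy that produces the clean single factor $(-1)^{p(p-1)/2}$.
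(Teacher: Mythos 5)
Your proposal is correct, but note that this paper does not actually prove Theorem 3.1 --- it is imported by citation from \cite{Suz3} --- so there is no in-paper argument to compare against. Your direct computation of $I_{\Delta}\bigl(\tfrac{1}{p!}\mathrm{tr}((\tfrac{-\Omega}{2\pi i})^p)\bigr)$ is the natural route and all the pieces check out: for the abelian group the Maurer--Cartan equation kills $d\theta_s$ and the bracket term, so $\Omega|_{\Delta^p\times PT^n(p)}=\sum_s dt_s\wedge\theta_s$ is purely of bidegree $(1,1)$ and diagonal; the expansion of the $p$-th power with multiplicity $p!$ cancels the $1/p!$ from the multinomial count (leaving the overall $1/p!$ of $\mathrm{ch}_p$); the simplex integral $\int_{\Delta^p}dt_0\wedge\cdots\wedge\widehat{dt_j}\wedge\cdots\wedge dt_p=(-1)^j/p!$ is right; and your telescoping identity is exactly the one the paper itself uses (in the reverse direction) in the proof of Theorem 3.2, which gives an internal consistency check. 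The three signs $(-1)^{p(p-1)/2}$, $(-1)^p$ and $(-1)^p$ combine as you say, and the $p=1$ case agrees with the pullback of the $c_1$ cocycle of Example 2.1 under $\gamma$, so the normalization $\mathrm{ch}_p=\tfrac{1}{p!}\mathrm{tr}((\tfrac{-X}{2\pi i})^p)$ you read off from Remarks 2.1 and 2.2 is the intended one. The only thing I would ask you to make explicit in a written version is the orientation convention on $\Delta^p$ (that $\int_{\Delta^p}dt_1\wedge\cdots\wedge dt_p=+1/p!$), since every sign in the computation is measured against it.
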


\begin{theorem}
The Chern-Simons form $T\bar{\omega}_{p-1}$ of the $p$-th Chern character  of the universal torus bundle in $ \Omega ^{p} (PT^n(p-1)) $ is given as follows:
$$T\bar{\omega}_{p-1}= (-1)^{\frac{p(p+1)}{2}   } \frac{1}{p! } \left( \frac{1}{2 \pi } \right)^p   \sum _{k=1} ^n \left(d \theta ^0_k \wedge
d \theta ^1_k \wedge \cdots \wedge d \theta ^{p-1}_k \right).$$
\end{theorem}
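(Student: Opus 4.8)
The plan is to verify directly that $T\bar{\omega}_{p-1}$ is a primitive of $\bar{\omega}_p$ for the total differential $D = d' + d''$ on $\Omega^*(PT^n)$. Since $H^*(ET^n)$ is trivial, this primitivity characterizes the Chern--Simons form up to a coboundary, so it suffices to check the single equation $D(T\bar{\omega}_{p-1}) = \bar{\omega}_p$, with $\bar{\omega}_p$ as in Theorem 3.1.

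First I would dispose of the $d''$ contribution. The form $T\bar{\omega}_{p-1}$ is a constant multiple of a wedge of the closed $1$-forms $d\theta^s_k$, so its ordinary exterior derivative on $PT^n(p-1)$ vanishes and hence $d''(T\bar{\omega}_{p-1}) = 0$. Consequently $D(T\bar{\omega}_{p-1}) = d'(T\bar{\omega}_{p-1}) = \sum_{i=0}^{p} (-1)^i \bar{\varepsilon}_i^*(T\bar{\omega}_{p-1})$.

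Next I would compute each face pullback. Since $\bar{\varepsilon}_i$ deletes $g_i$, the coordinate $\theta^s$ on the target pulls back to $\theta^s$ on the source for $s<i$ and to $\theta^{s+1}$ for $s\geq i$; hence $\bar{\varepsilon}_i^*(d\theta^s_k) = d\theta^s_k$ for $s<i$ and $\bar{\varepsilon}_i^*(d\theta^s_k) = d\theta^{s+1}_k$ for $s\geq i$. Therefore $\bar{\varepsilon}_i^*$ applied to $d\theta^0_k\wedge\cdots\wedge d\theta^{p-1}_k$ simply omits the factor $d\theta^i_k$, producing $\bigwedge_{s=0,\,s\neq i}^{p} d\theta^s_k$, and $d'(T\bar{\omega}_{p-1})$ becomes an explicit alternating sum over the omitted index $i\in\{0,\dots,p\}$.

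The decisive step is the combinatorial identity (for each fixed $k$, abbreviating $\omega_s := d\theta^s_k$)
$$\prod_{s=0}^{p-1}(\omega_s - \omega_{s+1}) = (-1)^p \sum_{i=0}^{p} (-1)^i \bigwedge_{s=0,\,s\neq i}^{p}\omega_s,$$
which converts the expression in Theorem 3.1 into the one just obtained from $d'$. I would prove it by expanding the product: a term survives only when the $p$ chosen $1$-forms are distinct, so exactly one index $i\in\{0,\dots,p\}$ is missing, and then the nesting of the factors forces factors $0,\dots,i-1$ to contribute $\omega_0,\dots,\omega_{i-1}$ and factors $i,\dots,p-1$ to contribute $-\omega_{i+1},\dots,-\omega_p$, yielding the sign $(-1)^{p-i}$. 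I expect the sign bookkeeping to be the main obstacle: one must verify $(-1)^{p(p-1)/2}\cdot(-1)^p = (-1)^{p(p+1)/2}$, so that the prefactor of $\bar{\omega}_p$ turns into exactly the prefactor claimed for $T\bar{\omega}_{p-1}$, whence $D(T\bar{\omega}_{p-1}) = \bar{\omega}_p$ and the proof is complete.
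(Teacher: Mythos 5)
Your proposal is correct and follows essentially the same route as the paper: the paper's proof consists precisely of the combinatorial identity $\prod_{s=0}^{p-1}(d\theta^s_k-d\theta^{s+1}_k)=(-1)^p\sum_{j}(-1)^{j}\bar{\varepsilon}_j^*(d\theta^0_k\wedge\cdots\wedge d\theta^{p-1}_k)$, from which $(d'+d'')T\bar{\omega}_{p-1}=\bar{\omega}_p$ follows. You merely spell out the steps the paper leaves implicit (the face-pullback computation, $d''T\bar{\omega}_{p-1}=0$, and the sign check $(-1)^{p(p-1)/2}(-1)^p=(-1)^{p(p+1)/2}$), all of which are accurate.
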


\begin{proof}
$$ \prod_{s=0} ^{p-1} \left(d \theta^s_k - d \theta^{s+1}_k \right)=\sum^{p}_{j=0} (-1)^{p-j} d\theta^0_k  \wedge \cdots \wedge d\theta_k^{j-1} \wedge d\theta_k^{j+1} \wedge \cdots \wedge d\theta_k^p $$
$$=(-1)^p \sum^{p}_{j=0}\bar{{\varepsilon}}^*_j(d \theta ^0_k \wedge
d \theta ^1_k \wedge \cdots \wedge d \theta ^{p-1}_k ).$$
So we can see that $(d'+d'')T\bar{\omega}_{p-1}=\bar{\omega}_{p}$.
\end{proof}

\begin{theorem}[\cite{Suz3}]
The cocycle $\bar{\mu}_{p}$ in $ \Omega ^{p} (PT^n(p)) $ which corresponds to the $p$-th Chern class of the universal torus bundle is given as follows:
$$\bar{\mu}_{p}= (-1)^{\frac{p(p-1)}{2}   } \frac{1}{p! } \left( \frac{1}{2 \pi  } \right)^p   \sum _{1 \le k_1 < \cdots < k_p \le n} {\rm det} \left( d \theta^s_{k_t} - d \theta^{s+1}_{k_t} \right)_{0 \le s,t \le p-1}.$$
\end{theorem}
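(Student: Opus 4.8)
The plan is to compute $\bar{\mu}_p$ directly as $I_\Delta$ applied, in simplicial degree $p$, to the Chern--Weil form $c_p(\Omega)$ of the canonical connection on $\gamma:PT^n\to NT^n$, running exactly the argument behind Theorem 3.1 but with the elementary symmetric polynomial in place of the power sum. First I would record the curvature on $\Delta^p\times PT^n(p)$. Since $T^n$ is abelian, $[\theta,\theta]=0$ and each $\theta_s=g_s^{-1}dg_s=\mathrm{diag}(i\,d\theta^s_1,\dots,i\,d\theta^s_n)$ is closed, so $\Omega=d\theta=\sum_{s=0}^p dt_s\wedge\theta_s$ is diagonal with $k$-th entry $\Omega_k=i\sum_{s=0}^p dt_s\wedge d\theta^s_k$. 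Writing $\beta_k:=\sum_{s=0}^p dt_s\wedge d\theta^s_k$, the normalized eigenvalues are $\tfrac{-1}{2\pi i}\Omega_k=\tfrac{-1}{2\pi}\beta_k$, and the invariant polynomial for $c_p$ is the $p$-th elementary symmetric polynomial of these, so $c_p(\Omega)=\left(\tfrac{-1}{2\pi}\right)^p\sum_{k_1<\dots<k_p}\beta_{k_1}\wedge\cdots\wedge\beta_{k_p}$. Everything thus reduces to computing $I_\Delta\!\left(\beta_{k_1}\wedge\cdots\wedge\beta_{k_p}\right)$ for a fixed increasing multi-index.

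Next I would carry out the fibre integration over $\Delta^p$. Moving the $p$ factors $dt_{s_t}$ to the front of $\beta_{k_1}\wedge\cdots\wedge\beta_{k_p}$ produces the global sign $(-1)^{p(p-1)/2}$ and leaves $\sum_{s_1,\dots,s_p}(dt_{s_1}\wedge\cdots\wedge dt_{s_p})\wedge(d\theta^{s_1}_{k_1}\wedge\cdots\wedge d\theta^{s_p}_{k_p})$. A term survives only when $s_1,\dots,s_p$ are distinct, i.e. when they run over $\{0,\dots,p\}\setminus\{j\}$ for some omitted index $j$; grouping by $j$ and summing over the orderings $\sigma$ of the remaining indices, the $dt$-part becomes $\mathrm{sgn}(\sigma)\,\omega_j$ with $\omega_j=dt_0\wedge\cdots\widehat{dt_j}\cdots\wedge dt_p$, and I would use $\int_{\Delta^p}\omega_j=(-1)^j/p!$ (the same simplex computation underlying Theorem 3.2). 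The surviving $\sigma$-sum $\sum_\sigma\mathrm{sgn}(\sigma)\,d\theta^{a_{\sigma(1)}}_{k_1}\wedge\cdots\wedge d\theta^{a_{\sigma(p)}}_{k_p}$, where $a_1<\cdots<a_p$ enumerates $\{0,\dots,p\}\setminus\{j\}$, is by definition the determinant of $1$-forms of the matrix with entries $d\theta^{a_r}_{k_t}$. Hence $I_\Delta\!\left(\beta_{k_1}\wedge\cdots\wedge\beta_{k_p}\right)=(-1)^{p(p-1)/2}\tfrac{1}{p!}\sum_{j=0}^p(-1)^j\det\!\left(d\theta^{a_r(j)}_{k_t}\right)$.

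Finally I would recognize the alternating sum over $j$ as a single determinant of differences. Using multilinearity and alternation of the determinant in its rows $v_s:=(d\theta^s_{k_1},\dots,d\theta^s_{k_p})$, the expansion of $\det(v_0-v_1,\;v_1-v_2,\;\dots,\;v_{p-1}-v_p)$ leaves exactly one nonzero term for each omitted index $j$ — the rows $v_0,\dots,\widehat{v_j},\dots,v_p$ with sign $(-1)^{p-j}$ — so that $\sum_{j}(-1)^j\det(d\theta^{a_r(j)}_{k_t})=(-1)^p\det(d\theta^s_{k_t}-d\theta^{s+1}_{k_t})_{0\le s,t\le p-1}$. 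This is the matrix counterpart of the telescoping identity $\prod_{s}(d\theta^s_k-d\theta^{s+1}_k)=(-1)^p\sum_j(-1)^j\,d\theta^0_k\wedge\cdots\widehat{d\theta^j_k}\cdots\wedge d\theta^p_k$ used in Theorem 3.2. Collecting the constants via $\left(\tfrac{-1}{2\pi}\right)^p(-1)^p=\left(\tfrac{1}{2\pi}\right)^p$ and summing over $k_1<\cdots<k_p$ yields precisely the stated formula for $\bar{\mu}_p$, and closedness is automatic since $\bar{\mu}_p$ is $I_\Delta$ of a Chern--Weil form.

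The step I expect to be the main obstacle is pinning down the sign convention for the determinant of a matrix of $1$-forms and checking that the two determinants above are formed with the same convention. Because the entries are odd, the Leibniz expansion that is genuinely alternating in the rows is the one that emerges from the integration, and one must verify that the row-multilinear telescoping of the last paragraph is compatible with it, so that the global sign comes out as $(-1)^{p(p-1)/2}$ rather than, say, $(-1)^{p(p+1)/2}$. I would guard against an error here by specializing the entire computation to the power-sum polynomial: the identical mechanism must then reproduce the known Chern-character form $\bar{\omega}_p$ of Theorem 3.1 (with $\prod_s(d\theta^s_k-d\theta^{s+1}_k)$ in place of the determinant), which provides a complete consistency check on every sign.
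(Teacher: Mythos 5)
The paper does not prove this statement: Theorem 3.3 is quoted from the reference [Suz3] with no argument supplied here, so there is no in-paper proof to compare against. Judged on its own, your derivation is correct and is the natural one: it is exactly the universal Chern--Weil construction of Section 2 specialized to the abelian canonical connection ($\Omega$ diagonal with entries $i\beta_k$, $\beta_k=\sum_s dt_s\wedge d\theta^s_k$, since $[\theta,\theta]=0$ and each Maurer--Cartan factor is closed), followed by the fibre integration $\int_{\Delta^p}\omega_j=(-1)^j/p!$ and the telescoping identity. I checked the sign bookkeeping: the reordering sign $(-1)^{p(p-1)/2}$, the factor $(-1)^p$ from $\sum_j(-1)^j\det(v_0,\dots,\widehat{v_j},\dots,v_p)=(-1)^p\det(v_0-v_1,\dots,v_{p-1}-v_p)$, and $\left(\tfrac{-1}{2\pi}\right)^p(-1)^p=\left(\tfrac{1}{2\pi}\right)^p$ combine to the stated coefficient. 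Your telescoping step is moreover literally the matrix version of the computation the paper itself performs in the proofs of Theorems 3.2 and 3.4 (there used to verify the Chern--Simons identity $(d'+d'')T\bar\mu_{p-1}=\bar\mu_p$), so the argument is fully consistent with the paper's machinery.

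On the one issue you flag as the potential obstacle: it resolves cleanly. If $\det$ of a matrix of $1$-forms is taken to mean the column-ordered Leibniz sum $\sum_\sigma\mathrm{sgn}(\sigma)\,M_{\sigma(1)1}\wedge\cdots\wedge M_{\sigma(p)p}$ -- which is exactly the expression that falls out of your simplex integration -- then this functional is still multilinear and alternating in the \emph{rows} (swapping two rows reindexes $\sigma$ by a transposition and flips $\mathrm{sgn}$, independently of the parity of the entries), so the row-telescoping expansion in your last paragraph is legitimate and produces the determinant of differences with the same convention. The only caveat worth recording is that for odd entries the row-ordered and column-ordered Leibniz sums are \emph{not} interchangeable in the usual way, so one must read the $\det$ in the statement with the column-ordered convention throughout; your proposed consistency check against Theorem 3.1 (replacing $e_p$ by the power sum) would indeed catch any slip here.
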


\begin{theorem}
The Chern-Simons form $T\bar{\mu}_{p-1}$ of the $p$-th Chern class of the universal torus bundle  in $ \Omega ^{p} (PT^n(p-1)) $ is given as follows:
$$T\bar{\mu}_{p-1}= (-1)^{\frac{p(p+1)}{2}   } \frac{1}{p! } \left( \frac{1}{2 \pi  } \right)^p   \sum _{1 \le k_1 < \cdots < k_p \le n} {\rm det} \left( d \theta^s_{k_t}  \right)_{0 \le s,t \le p-1}.$$
\end{theorem}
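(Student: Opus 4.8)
The plan is to follow the proof of the Chern character case (Theorem 3.2) verbatim, with the single product replaced by a determinant. Write $\bar{\mu}_{p}=C'\sum_{k_1<\cdots<k_p}\det M$ and $T\bar{\mu}_{p-1}=C\sum_{k_1<\cdots<k_p}\det N$, where $M_{s,t}=d\theta^s_{k_t}-d\theta^{s+1}_{k_t}$, $N_{s,t}=d\theta^s_{k_t}$, and $C'=(-1)^{\frac{p(p-1)}{2}}\frac{1}{p!}(\frac{1}{2\pi})^p$, $C=(-1)^{\frac{p(p+1)}{2}}\frac{1}{p!}(\frac{1}{2\pi})^p$. The whole theorem then reduces to the single determinantal identity
$$\det\left(d\theta^s_{k_t}-d\theta^{s+1}_{k_t}\right)_{0\le s,t\le p-1}=(-1)^p\sum_{j=0}^{p}(-1)^j\,\bar{\varepsilon}_j^{*}\det\left(d\theta^s_{k_t}\right)_{0\le s,t\le p-1}.$$

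First I would record the two routine inputs. Every entry $d\theta^s_{k_t}$ is closed, so $\det N$ is a combination of wedges of closed $1$-forms; hence $d''(T\bar{\mu}_{p-1})=0$ and only $d'=\sum_j(-1)^j\bar{\varepsilon}_j^{*}$ contributes. From the face maps one has $\bar{\varepsilon}_j^{*}\theta^s=\theta^s$ for $s<j$ and $\bar{\varepsilon}_j^{*}\theta^s=\theta^{s+1}$ for $s\ge j$; since $\bar{\varepsilon}_j^{*}$ is an algebra map it commutes with the determinant, so $\bar{\varepsilon}_j^{*}\det N$ is the determinant of the $(p+1)\times p$ matrix with rows $v_s=(d\theta^s_{k_0},\dots,d\theta^s_{k_{p-1}})$, $0\le s\le p$, after deletion of the row $v_j$.

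For the identity itself I would expand $\det M$ by multilinearity in its $p$ rows, writing the $s$-th row as $v_s-v_{s+1}$; this gives $\sum_{\epsilon\in\{0,1\}^p}(-1)^{|\epsilon|}\det(\,\cdot\,)$ where the $s$-th row of the $\epsilon$-term is $v_{s+\epsilon_s}$. The sequence $(s+\epsilon_s)_s$ is nondecreasing with $s+\epsilon_s\in\{s,s+1\}$, so either it repeats a value — whence two rows coincide and the determinant vanishes — or it is strictly increasing, which forces $\epsilon_s=0$ for $s<j$ and $\epsilon_s=1$ for $s\ge j$ for a unique $j$; that surviving term omits the level $j$ and carries sign $(-1)^{|\epsilon|}=(-1)^{p-j}$. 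Summing over $j$ and using $(-1)^{p-j}=(-1)^p(-1)^j$ yields the displayed identity. Plugging it into $d'(T\bar{\mu}_{p-1})$ gives $(-1)^pC\sum\det M$, and the sign check $(-1)^pC=C'$ — that is, $\tfrac{p(p+1)}{2}+p\equiv\tfrac{p(p-1)}{2}\pmod 2$, which holds since the difference is $2p$ — delivers $(d'+d'')T\bar{\mu}_{p-1}=\bar{\mu}_{p}$.

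The hard part will be justifying the vanishing of the repeated-row terms, which is the one place where this argument is genuinely more delicate than the Chern character case. There, a repeated level produced the same $1$-form wedged with itself, which is automatically zero; here a repeated row is $v_{s+1}$ occurring twice, i.e. two rows built from the same level but \emph{distinct} column indices $k_t$, so the individual monomials $d\theta^{s+1}_{k_t}\wedge d\theta^{s+1}_{k_{t'}}$ do not vanish. I must therefore use that the determinant of this matrix of $1$-forms is genuinely alternating in its rows; for degree-one (odd) entries this singles out the column-ordered reading of $\det$, under which two equal rows cancel via the sign-reversing pairing $\sigma\leftrightarrow(a\,b)\circ\sigma$. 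Fixing this convention and confirming the alternating property is the crux; the remaining telescoping and sign bookkeeping are identical to Theorem 3.2.
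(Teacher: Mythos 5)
Your proposal is correct and is essentially the paper's own argument: both reduce the theorem to the identity $\sum_{j=0}^{p}(-1)^j\bar{\varepsilon}_j^*\,{\rm det}(d\theta^s_{k_t}) = (-1)^p\,{\rm det}(d\theta^s_{k_t}-d\theta^{s+1}_{k_t})$, which the paper obtains by successive column operations on the columns ${\bf d\theta^j}$ and you obtain by a one-shot multilinear expansion of the right-hand side with identification of the surviving terms. The alternating-in-the-level-index property that you flag as the crux is exactly the fact the paper uses silently when its column-replacement step discards a determinant with two proportional columns, so your explicit choice of the ordered-product convention for a determinant of $1$-forms only makes the shared argument more careful, not different.
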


\begin{proof}
We write:
$${\bf d \theta^j}=
\begin{pmatrix}
d \theta^j_{k_0}\\
\vdots \\
d \theta^j_{k_{p-1}}

\end{pmatrix},~~~(j=0, \cdots,p-1),$$

$$\left( d \theta^s_{k_t}  \right)_{0 \le s,t \le p-1}=({\bf d \theta^0}~ {\bf d \theta^1} \cdots {\bf d \theta^{p-1}}).$$
Then the following equations hold.
$$\bar{{\varepsilon}}^*_0 {\rm det} ({\bf d \theta^0}~ {\bf d \theta^1} \cdots {\bf d \theta^{p-1}})-\bar{{\varepsilon}}^*_1 {\rm det} ({\bf d \theta^0}~ {\bf d \theta^1} \cdots {\bf d \theta^{p-1}})$$
$$={\rm det} ({\bf d \theta^1}~ {\bf d \theta^2} \cdots {\bf d \theta^{p}})-{\rm det} ({\bf d \theta^0}~ {\bf d \theta^2} \cdots {\bf d \theta^{p}})$$
$$={\rm det} (({\bf d \theta^1}-{\bf d \theta^0})~ {\bf d \theta^2} \cdots {\bf d \theta^{p}}),$$
$${\rm det} (({\bf d \theta^1}-{\bf d \theta^0})~ {\bf d \theta^2} \cdots {\bf d \theta^{p}})+\bar{{\varepsilon}}^*_2 {\rm det} ({\bf d \theta^0}~ {\bf d \theta^1} \cdots {\bf d \theta^{p-1}})$$
$$={\rm det} (({\bf d \theta^1}-{\bf d \theta^0})~ {\bf d \theta^2}~ {\bf d \theta^3} \cdots {\bf d \theta^{p}})+ {\rm det} ({\bf d \theta^0}~ {\bf d \theta^1}~ {\bf d \theta^3} \cdots {\bf d \theta^{p-1}})$$
$$={\rm det} (({\bf d \theta^1}-{\bf d \theta^0})~ {\bf d \theta^2}~ {\bf d \theta^3} \cdots {\bf d \theta^{p}})+ {\rm det} (({\bf d \theta^1}-{\bf d \theta^0})~ (-{\bf d \theta^1})~ {\bf d \theta^3} \cdots {\bf d \theta^{p-1}})$$
$$={\rm det} (({\bf d \theta^1}-{\bf d \theta^0})~ ({\bf d \theta^2}-{\bf d \theta^1})~ {\bf d \theta^3} \cdots {\bf d \theta^{p}}).$$
Repeating this argument, we can see that $(d'+d'')T\bar{\mu}_{p-1}=\bar{\mu}_{p}$.
\end{proof}

\begin{remark}
In the case that $n$ is equal to $p$, the cochain in theorem 3.4 is written as follows:
$$T\bar{\mu}_{p-1}= (-1)^{\frac{p(p+1)}{2}  } \frac{1}{p! } \left( \frac{1}{2 \pi } \right)^p 
  {\rm det} \left( d \theta^s_{t} \right)_{0 \le s,t \le p-1}.$$

\end{remark}

\subsection{The Chern-Simons form of the $3$rd Chern character}


In this subsection we exhibit the Chern-Simons form of the $3$rd Chern character in $\Omega ^{5}(PG) $.  
Throughout this subsection, $G= GL(n ;  \mathbb{C} )$.

We first recall the cocycle in ${\Omega}^{p+q}(PG(p-q)) (0 \leq q \leq p-1)$ which corresponds to the $p$-th Chern character.

\begin{theorem}[\cite{Suz}]
{We set:}
$$ \bar{S}_{p-q}=\sum _{\sigma \in \mathfrak{S} _{p-q-1 } }({\rm sgn} ( \sigma ) ) (\theta _{ \sigma (1)}  - \theta _{ \sigma (1)+1 } )
\cdots (\theta _{ \sigma (p-q-1) }  - \theta _{ \sigma (p-q-1)+1 } ).$$
{Then the cocycle in } $ {\Omega}^{p+q} ( PG (p-q) ) \ (0 \leq q \leq p-1) $ { which corresponds to the $p$-th Chern character
${\rm ch}_p$ is}

$$ \frac{1}{p! } \left(\frac{1}{2 \pi i } \right)^p (-1)^{(p-q)(p-q-1)/2  } \times \hspace{18em} $$
$$    \mathrm{tr} \sum \left( (p(\theta _0 - \theta _{1})) \wedge \bar{H}_q (\bar{S}_{p-q}) \times  \int _{{\Delta}^{p-q}} \prod_{i<j} (t_i t_j)^{a_{ij}(\bar{H}_q (\bar{S}_{p-q}))} dt_1 \wedge \cdots \wedge dt_{p-q} \right).$$
Here $\bar{H}_q (\bar{S}_{p-q})$ means the terms that $ (\theta _{i} - \theta _{j} )^2 \enspace (0 \leq i < j \leq p-q) $ { are
put }$q${ -times between }$(\theta _{k-1} - \theta _{k} )$ { and }$(\theta _{l} - \theta _{l+1} )$ { in $ \bar{S}_{p-q}$ permitting overlaps;
 $a_{ij}(\bar{H}_q (\bar{S}_{p-q}))$
means the number of }$(\theta _{i} - \theta _{j} )^2 ${ in it.
$\sum$ means the sum of all such terms}.

\end{theorem}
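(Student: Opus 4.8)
The plan is to obtain the stated cochain as the simplicial--de Rham representative $I_\Delta({\rm ch}_p(\Omega))$ evaluated at simplicial level $p-q$. Concretely, I would take the invariant polynomial $P(X)=\frac{1}{p!}\left(\frac{1}{2\pi i}\right)^p{\rm tr}(X^p)$ representing ${\rm ch}_p$, restrict $P(\Omega)$ to $\Delta^{p-q}\times PG(p-q)$, and compute $\int_{\Delta^{p-q}}P(\Omega)$, which lands in $\Omega^{p+q}(PG(p-q))$ (the form degree being $2p-(p-q)=p+q$). The first step is to write the curvature of the canonical connection $\theta=\sum_{i=0}^{p-q}t_i\theta_i$ explicitly. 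Using the Maurer--Cartan equation $d\theta_i=-\theta_i^2$ together with the barycentric relation $\sum_i t_i=1$, a direct calculation gives
$$\Omega=\sum_i dt_i\wedge\theta_i-\frac{1}{2}\sum_{i,j}t_it_j(\theta_i-\theta_j)^2.$$
I will call the first summand (linear in the $\theta_i$ and carrying one $dt$) the $A$-part and the second (quadratic in the $\theta_i$ and weighted by $t_it_j$) the $B$-part; note that the differences $\theta_i-\theta_j$, which are the building blocks of the final formula, already appear here.

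The second step is a dimension count inside ${\rm tr}(\Omega^p)={\rm tr}((A+B)^p)$. Since $\int_{\Delta^{p-q}}$ is nonzero only on monomials carrying exactly $p-q$ independent one-forms $dt_i$, and each $A$-factor contributes one $dt$ while each $B$-factor contributes none, only the terms with precisely $p-q$ copies of $A$ and $q$ copies of $B$ survive integration. Using the cyclic invariance of the trace I would anchor the expansion at one $A$-factor; this accounts for the distinguished leading difference $\theta_0-\theta_1$ and, together with the multiplicity of equivalent cyclic words, the numerical factor $p$, and it organizes the $q$ inserted $B$-factors $-\frac12(\theta_i-\theta_j)^2$ into all the ways of placing $q$ squared differences among the remaining linear differences, which is exactly the operation $\bar H_q$ in the statement. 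The remaining $p-q-1$ linear $A$-factors, after using $\sum_i dt_i=0$ to rewrite $A=\sum_{i\ge1}dt_i\wedge(\theta_i-\theta_0)$ and telescoping into consecutive differences $\theta_k-\theta_{k+1}$, assemble---together with the antisymmetry of $dt_{i_1}\wedge\cdots\wedge dt_{i_{p-q}}$---into the signed permutation sum $\bar S_{p-q}$.

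The third step is to carry out the simplex integration. Each surviving term is multiplied by a monomial $\prod_{i<j}(t_it_j)^{a_{ij}}$, where $a_{ij}$ records how many factors $(\theta_i-\theta_j)^2$ were inserted by $\bar H_q$; pairing this with the top form $dt_1\wedge\cdots\wedge dt_{p-q}$ and integrating over $\Delta^{p-q}$ reproduces precisely the Dirichlet-type integral displayed in the theorem. Finally I would collect the numerical and sign factors: the prefactor $\frac{1}{p!}\left(\frac{1}{2\pi i}\right)^p$ is inherited from $P$, and the global sign $(-1)^{(p-q)(p-q-1)/2}=(-1)^{\binom{p-q}{2}}$ is the Koszul sign incurred when the $p-q$ one-forms $dt_i$ are moved past the intervening $\theta$-factors to form the volume form.

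I expect the main obstacle to be the combinatorial and sign bookkeeping of the second step---verifying that the cyclic anchoring of ${\rm tr}((A+B)^p)$, followed by integration, reorganizes exactly into $\bar H_q(\bar S_{p-q})$ with the single distinguished factor $p(\theta_0-\theta_1)$ and no spurious terms. Matching the telescoped consecutive differences to the permutation sum $\bar S_{p-q}$, confirming that the insertion pattern of the $B$-factors is faithfully encoded by $\bar H_q$ and the exponents $a_{ij}$, and checking that all Koszul signs conspire into the single global sign, are the delicate points; the Dirichlet integrals themselves are routine once the exponents $a_{ij}$ are correctly identified.
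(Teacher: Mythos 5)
The paper itself gives no proof of this theorem; it is imported verbatim from \cite{Suz}, so the only benchmark is the universal Chern--Weil machinery set up in Section 2 (the canonical connection $\theta=\sum_i t_i\theta_i$, its curvature, and integration over ${\Delta}^{p-q}$ via $I_{\Delta}$), and your outline is exactly that intended derivation: the curvature decomposition $\Omega=\sum_i dt_i\wedge\theta_i-\frac{1}{2}\sum_{i,j}t_it_j(\theta_i-\theta_j)^2$ is correct, and the selection of words with $p-q$ copies of the $dt$-part and $q$ copies of the $t_it_j(\theta_i-\theta_j)^2$-part is the right mechanism producing $\bar{H}_q(\bar{S}_{p-q})$ and the Dirichlet integrals. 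The one place I would add care to your ``delicate points'' list is the cyclic anchoring: a word with $p-q$ factors of the $A$-type contributes a factor $p/(p-q)$ (not $p$) when anchored at an $A$-factor, and the missing $1/(p-q)$ must be recovered from the normalization of the simplex integral and the $(p-q-1)!$ terms of $\bar{S}_{p-q}$; otherwise the approach is sound.
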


As a corollary of this theorem, we obtain the cocycle which corresponds to the $3$rd  Chern character in $ \Omega ^{6} (PG) $.

\begin{corollary}
The cocycle which corresponds to the $3$rd  Chern character in $ \Omega ^{6} (PG) $ is the sum of the following
$\bar{C}_{1,5} , \bar{C}_{2,4}$ and $\bar{C}_{3,3}$:
$$
\begin{CD}
0 \\
@AA{d''}A \\
{ \bar{C}_{1,5} \in {\Omega}^{5} (PG(1))}@>{d'}>>{\Omega}^{5} (PG(2))\\
@.@AA{d''}A\\
@.{ \bar{C}_{2,4} \in {\Omega}^{4} (PG(2))}@>{d'}>> {\Omega}^{4} (PG(3)) \\
@.@.@AA{d''}A\\
@.@.{ \bar{C}_{3,3} \in {\Omega}^{3} (PG(3))}@>{d'}>> 0
\end{CD}
$$

$$\bar{C}_{1,5} = \frac{1}{3!} \left( \frac{1}{2 \pi i} \right) ^3 \frac{1 }{10}{\rm tr}(\theta_0-\theta_1)^5,   $$

$$    \bar{C}_{2,4}= \frac{-1}{3!} \left( \frac{1}{2 \pi i} \right) ^3 (
\frac{1 }{2}{\rm tr}  ( \theta_0-\theta_1)^3 ( \theta_1-\theta_2) ~~~~~~~~~~~~~~~~~~~~~~~~~ $$
 $$~~~~~~~~~ + \frac{1 }{4}  {\rm tr} ( \theta_0-\theta_1)( \theta_1-\theta_2) ( \theta_0-\theta_1) ( \theta_1-\theta_2)  $$
$$ ~~~~~~~~~~~~~~~~~~~~~~~~~~~~~~~~~~~~~+ \frac{1 }{2} {\rm tr}   ( \theta_0-\theta_1) ( \theta_1-\theta_2)^3 ),   $$

$$  \bar{C}_{3,3} =\frac{-1}{3!} \left( \frac{1}{2 \pi i} \right) ^3 (\frac{1 }{2}{\rm tr}( \theta_0-\theta_1) ( \theta_1-\theta_2)( \theta_2-\theta_3)~~~~~~~~~~~~~~$$
$$ ~~~~~~~~~~~~~~~~~~~~~~~~~~~~~~~~  -\frac{1 }{2} {\rm tr} ( \theta_0-\theta_1)( \theta_2-\theta_3)( \theta_1-\theta_2)). $$

\end{corollary}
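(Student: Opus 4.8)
The plan is to derive the three components $\bar{C}_{1,5}$, $\bar{C}_{2,4}$, $\bar{C}_{3,3}$ directly from Theorem 3.5 by specializing to $p=3$ and running through the three admissible values $q=2,1,0$, which land the cocycle in $\Omega^{5}(PG(1))$, $\Omega^{4}(PG(2))$, $\Omega^{3}(PG(3))$ respectively. For each $q$ I would first write down $\bar{S}_{p-q}$ explicitly: since the inner product in $\bar{S}_{p-q}$ has $p-q-1$ factors and is summed over $\mathfrak{S}_{p-q-1}$, one gets $\bar{S}_{1}=1$ (the empty product), $\bar{S}_{2}=\theta_{1}-\theta_{2}$, and $\bar{S}_{3}=(\theta_{1}-\theta_{2})(\theta_{2}-\theta_{3})-(\theta_{2}-\theta_{3})(\theta_{1}-\theta_{2})$. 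Prepending the leading factor $p(\theta_{0}-\theta_{1})=3(\theta_{0}-\theta_{1})$ then fixes the skeleton string into which $\bar{H}_{q}$ inserts its $q$ squared factors $(\theta_{i}-\theta_{j})^{2}$.

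Next I would evaluate the simplex integrals. Each inserted square $(\theta_{i}-\theta_{j})^{2}$ contributes a weight $t_{i}t_{j}$, so if $\alpha_{m}$ denotes the total number of inserted squares touching the index $m$, the integral factor is the Dirichlet integral $\int_{\Delta^{p-q}}\prod_{m}t_{m}^{\alpha_{m}}\,dt_{1}\cdots dt_{p-q}=\frac{\prod_{m}\alpha_{m}!}{(p-q+\sum_{m}\alpha_{m})!}$, with $t_{0}=1-t_{1}-\cdots-t_{p-q}$. For $q=2$ only the square $(\theta_{0}-\theta_{1})^{2}$ is available and it is inserted twice, giving $\alpha=(2,2)$ and integral $\frac{2!\,2!}{5!}=\frac{1}{30}$; together with the factor $3$ and the sign $(-1)^{(p-q)(p-q-1)/2}=+1$ this reproduces the coefficient $\frac{1}{10}$ of $\bar{C}_{1,5}$. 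For $q=0$ no square is inserted, the integral is the volume $\int_{\Delta^{3}}dt=\frac{1}{6}$, the sign is $-1$, and the two orderings in $\bar{S}_{3}$ produce exactly the two trace monomials of $\bar{C}_{3,3}$ with coefficients $\pm\frac{1}{2}$.

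The genuinely delicate case is $q=1$, which produces $\bar{C}_{2,4}$. Here $\bar{H}_{1}$ inserts a single square chosen among $(\theta_{0}-\theta_{1})^{2}$, $(\theta_{1}-\theta_{2})^{2}$, $(\theta_{0}-\theta_{2})^{2}$ at the admissible gaps of the length-two skeleton $(\theta_{0}-\theta_{1})(\theta_{1}-\theta_{2})$. Every such square has $\alpha$-pattern a permutation of $(1,1,0)$, so every elementary insertion carries the same integral $\frac{1!\,1!\,0!}{4!}=\frac{1}{24}$ and hence the same numerical weight $3\cdot\frac{1}{24}=\frac{1}{8}$. Collecting these elementary terms, via the graded cyclicity of the trace (each $\theta$-difference being an odd form, so a cyclic permutation inside ${\rm tr}$ carries the corresponding Koszul sign), into the three monomials ${\rm tr}(\theta_{0}-\theta_{1})^{3}(\theta_{1}-\theta_{2})$, ${\rm tr}(\theta_{0}-\theta_{1})(\theta_{1}-\theta_{2})(\theta_{0}-\theta_{1})(\theta_{1}-\theta_{2})$, and ${\rm tr}(\theta_{0}-\theta_{1})(\theta_{1}-\theta_{2})^{3}$, the multiplicities of the three families (including the mixed square $(\theta_{0}-\theta_{2})^{2}$, whose contribution must be re-expressed through $\theta_{0}-\theta_{2}=(\theta_{0}-\theta_{1})+(\theta_{1}-\theta_{2})$) assemble into the coefficients $\frac{1}{2}$, $\frac{1}{4}$, $\frac{1}{2}$. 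I expect this bookkeeping — correctly enumerating the admissible insertions and tracking the signs when folding everything back with trace cyclicity — to be the main obstacle, whereas the cases $q=2$ and $q=0$ are essentially immediate specializations of Theorem 3.5.
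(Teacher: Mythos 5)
Your proposal is correct and takes essentially the same route as the paper: the paper offers no written proof at all, treating the result as an immediate specialization of Theorem 3.5 to $p=3$ with $q=2,1,0$, which is exactly what you do, and your Dirichlet-integral values $\tfrac{1}{30}$, $\tfrac{1}{24}$, $\tfrac{1}{6}$ and sign bookkeeping are right. The $q=1$ case does close as you claim: the six elementary insertions (three squares at the two admissible gaps of the skeleton), after expanding $(\theta_0-\theta_2)^2=((\theta_0-\theta_1)+(\theta_1-\theta_2))^2$ and cancelling the $\mathrm{tr}\,(\theta_0-\theta_1)^2(\theta_1-\theta_2)^2$-type terms by graded cyclicity, yield multiplicities $4,2,4$ and hence the coefficients $\tfrac12,\tfrac14,\tfrac12$.
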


\begin{theorem}
The Chern-Simons form of the $3$rd  Chern character in $ \Omega ^{5} (PG) $ is the sum of the following
$T\bar{C}_{0,5} , T\bar{C}_{1,4}$ and $T\bar{C}_{2,3}$:
$$
\begin{CD}
0 \\
@AA{d''=d}A \\
{T\bar{C}_{0,5} \in {\Omega}^{5} (G)}@>{d'}>>{ \bar{C}_{1,5}}\\
@.@AA{d''=-d}A\\
@. {T\bar{C}_{1,4} \in {\Omega}^{4} (PG(1))}@>{d'}>> { \bar{C}_{2,4} }\\
@.@.@AA{d''=d}A\\
@.@.{T\bar{C}_{2,3} \in {\Omega}^{3} (PG(2))}@>{d'}>> { \bar{C}_{3,3}}
\end{CD}
$$

$$T\bar{C}_{0,5} = \frac{-1}{3!} \left( \frac{1}{2 \pi i} \right) ^3 \frac{1 }{10}{\rm tr}(\theta_0^5),   $$

$$    T\bar{C}_{1,4}= \frac{-1}{3!} \left( \frac{1}{2 \pi i} \right) ^3 (
\frac{1 }{2}{\rm tr}  \theta_0^3  \theta_1  - \frac{1 }{4}  {\rm tr}  \theta_0 \theta_1 \theta_0\theta_1  + \frac{1 }{2} {\rm tr}    \theta_0 \theta_1^3 ),   $$

$$  T\bar{C}_{2,3} =\frac{1}{3!} \left( \frac{1}{2 \pi i} \right) ^3 (\frac{1 }{2}{\rm tr}( \theta_0 \theta_1\theta_2)-\frac{1 }{2} {\rm tr} ( \theta_0 \theta_2 \theta_1)). $$

\end{theorem}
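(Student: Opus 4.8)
The plan is to prove Theorem 3.6 by showing that the total cochain $T:=T\bar C_{0,5}+T\bar C_{1,4}+T\bar C_{2,3}\in\Omega^{5}(PG)$ satisfies $(d'+d'')T=\bar C_{1,5}+\bar C_{2,4}+\bar C_{3,3}$, i.e.\ that its coboundary is exactly the cocycle of Corollary 3.1 representing $\mathrm{ch}_3$. Since $d'+d''$ raises total degree by one while shifting bidegree in the two standard ways, this one equation splits, according to the bidegree $(p,6-p)$ of each piece, into the four identities
\[
d''T\bar C_{0,5}=0,\quad d'T\bar C_{0,5}+d''T\bar C_{1,4}=\bar C_{1,5},\quad d'T\bar C_{1,4}+d''T\bar C_{2,3}=\bar C_{2,4},\quad d'T\bar C_{2,3}=\bar C_{3,3},
\]
which are precisely the arrows recorded in the diagram of the statement. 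I would establish these four in turn.

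There are two computational engines. On $PG(p)=G^{p+1}$ each $\theta_i$ is the pullback of the Maurer--Cartan form, so by flatness $d\theta_i=-\theta_i^{2}$, and $d''$ is $(-1)^{p}$ times the ordinary exterior derivative; together with the graded cyclicity of the trace, $\mathrm{tr}(\alpha\beta)=(-1)^{|\alpha||\beta|}\mathrm{tr}(\beta\alpha)$, this reduces every application of $d''$ to a signed sum of traces of words in the $\theta_i$. For the simplicial differential $d'=\sum_i(-1)^{i}\bar{\varepsilon}_i^{*}$ the relevant rule is $\bar{\varepsilon}_i^{*}\theta_j=\theta_j$ for $j<i$ and $\bar{\varepsilon}_i^{*}\theta_j=\theta_{j+1}$ for $j\ge i$, so $d'$ acts by relabeling indices and taking alternating sums. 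All four identities are finite calculations in these two operations.

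The top and bottom ends are the easy cases. The identity $d''T\bar C_{0,5}=0$ is the closedness of the bi-invariant $5$-form $\mathrm{tr}(\theta_0^{5})$: applying $d\theta_0=-\theta_0^{2}$ termwise produces a multiple of $\mathrm{tr}(\theta_0^{6})$, which vanishes because an even-length trace of a single odd $1$-form is cyclically antisymmetric. The identity $d'T\bar C_{2,3}=\bar C_{3,3}$ is a pure face-operator computation: one expands $\bar{\varepsilon}_i^{*}\mathrm{tr}(\theta_0\theta_1\theta_2)$ and $\bar{\varepsilon}_i^{*}\mathrm{tr}(\theta_0\theta_2\theta_1)$ on $PG(3)$, forms the alternating sum, and matches the result against the expansions of $\mathrm{tr}((\theta_0-\theta_1)(\theta_1-\theta_2)(\theta_2-\theta_3))$ and $\mathrm{tr}((\theta_0-\theta_1)(\theta_2-\theta_3)(\theta_1-\theta_2))$.

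The main obstacle is the two middle identities, where $d'$ and $d''$ interact. For the bidegree-$(1,5)$ identity, after factoring out the common constant $\tfrac{1}{3!}\big(\tfrac{1}{2\pi i}\big)^{3}$ one must verify
\begin{multline*}
-\tfrac{1}{10}\big(\mathrm{tr}(\theta_1^{5})-\mathrm{tr}(\theta_0^{5})\big)
+ d\Big(\tfrac12\mathrm{tr}(\theta_0^{3}\theta_1) - \tfrac14\mathrm{tr}(\theta_0\theta_1\theta_0\theta_1) + \tfrac12\mathrm{tr}(\theta_0\theta_1^{3})\Big) \\
= \tfrac{1}{10}\,\mathrm{tr}\big((\theta_0-\theta_1)^{5}\big).
\end{multline*}
Here the two pure traces $\mathrm{tr}(\theta_0^{5}),\mathrm{tr}(\theta_1^{5})$ on the right are supplied exactly by $d'T\bar C_{0,5}$, while the $d$-term on the left contains only mixed words in $\theta_0,\theta_1$; thus every mixed trace $\mathrm{tr}(\theta_0^{a}\theta_1^{b}\cdots)$ appearing in the expansion of $\mathrm{tr}((\theta_0-\theta_1)^{5})$ must be reproduced, with precisely the right rational coefficient, by $d''T\bar C_{1,4}$ once $d\theta_i=-\theta_i^{2}$ has been substituted and the result reduced by cyclicity. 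It is exactly this matching that pins down the coefficients $\tfrac12,\tfrac14,\tfrac12$ and $\tfrac{1}{10}$. The bidegree-$(2,4)$ identity is handled in the same manner, now combining the Maurer--Cartan derivative of $T\bar C_{2,3}$ with the $d'$-image of $T\bar C_{1,4}$. The only delicate point throughout is the sign bookkeeping from graded cyclicity of odd-degree forms, which I would control by first grouping the words into cyclic classes and then comparing coefficients class by class.
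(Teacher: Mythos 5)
Your proposal is correct and follows essentially the same route as the paper: both decompose $(d'+d'')T=\bar C_{1,5}+\bar C_{2,4}+\bar C_{3,3}$ into the four bidegree identities and verify each by direct computation, using the Maurer--Cartan relation $d\theta_i=-\theta_i^2$ for the $d''$-terms and the relabeling action of the face operators $\bar\varepsilon_i^*$ for the $d'$-terms. The paper simply writes out the resulting trace identities explicitly (and leaves $d''T\bar C_{0,5}=0$ and $d'T\bar C_{2,3}=\bar C_{3,3}$ as checks, exactly the two cases you call easy), so there is no substantive difference in method.
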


\begin{proof}

$$d'{\rm tr}(\theta_0^5)={\rm tr}(\theta_1^5-\theta_0^5),~~~~~~~~~~~~~~~~~~~~~~~~~~~~~~~~~~~~~~~~~~~~~~~~~~~~~~~~~$$
$$d(
\frac{1 }{2}{\rm tr}  \theta_0^3  \theta_1  - \frac{1 }{4}  {\rm tr}  \theta_0 \theta_1 \theta_0\theta_1  + \frac{1 }{2} {\rm tr}    \theta_0 \theta_1^3 )~~~~~~~~~~~~~~~~~~~~~~~~~~~~~~~~~~~~~~~~~$$
$$=\frac{-1 }{2}{\rm tr} (\theta_0^4  \theta_1  -\theta_0^3  \theta_1^2 )+ \frac{1 }{2}  {\rm tr} ( \theta_0^2 \theta_1 \theta_0\theta_1 - \theta_0 \theta_1^2 \theta_0\theta_1)  - \frac{1 }{2} {\rm tr}  (  \theta_0^2 \theta_1^3 -\theta_0 \theta_1^4).$$
So we can see that $d'T\bar{C}_{0,5}+d'' T\bar{C}_{1,4}= \bar{C}_{1,5}$.
$$d'(
\frac{1 }{2}{\rm tr}  \theta_0^3  \theta_1  - \frac{1 }{4}  {\rm tr}  \theta_0 \theta_1 \theta_0\theta_1  + \frac{1 }{2} {\rm tr}    \theta_0 \theta_1^3 )~~~~~~~~~~~~~~~~~~~~~~~~~~~~~~~~~~~~~~~~~~~~~~~~~~$$
$$=\frac{1 }{2}{\rm tr}  \theta_1^3  \theta_2  - \frac{1 }{4}  {\rm tr}  \theta_1 \theta_2 \theta_1\theta_2  + \frac{1 }{2} {\rm tr}    \theta_1 \theta_2^3-(\frac{1 }{2}{\rm tr}  \theta_0^3  \theta_2  - \frac{1 }{4}  {\rm tr}  \theta_0 \theta_2 \theta_0\theta_2  + \frac{1 }{2} {\rm tr}    \theta_0 \theta_2^3)~~~~~~~~~~~~~~~~~~~~~~~~~~~~~~~~$$
$$+\frac{1 }{2}{\rm tr}  \theta_0^3  \theta_1  - \frac{1 }{4}  {\rm tr}  \theta_0 \theta_1 \theta_0\theta_1  + \frac{1 }{2} {\rm tr}    \theta_0 \theta_1^3,~~~~~~~~~~~~~~~~~~~~~~~~~~~~~~~~~~~~~~~~~~~~~~$$

\newpage
$$d(\frac{1 }{2}{\rm tr}( \theta_0 \theta_1\theta_2)-\frac{1 }{2} {\rm tr} ( \theta_0 \theta_2 \theta_1))~~~~~~~~~~~~~~~~~~~~~~~~~~~~~~~~~~~~~~~~~~~~~~~~~~~$$
$$=\frac{-1 }{2}{\rm tr}( \theta_0^2 \theta_1\theta_2-\theta_0 \theta_1^2\theta_2+\theta_0 \theta_1\theta_2^2)+\frac{1 }{2} {\rm tr} ( \theta_0^2 \theta_2 \theta_1-\theta_0 \theta_2^2 \theta_1+\theta_0 \theta_2 \theta_1^2).$$
So we can see that $d'T\bar{C}_{1,4}+d'' T\bar{C}_{2,3}= \bar{C}_{2,4}$. We can check also that $d''T\bar{C}_{0.5}=0$ and
$d' T\bar{C}_{2,3}= \bar{C}_{3,3}$.
\end{proof}

\subsection{The Chern-Simons form of the Euler class}


In this subsection we take $G=SO(4)$ and exhibit the Chern-Simons form of the Euler class in $\Omega ^{3}(PSO(4)) $.

For the Pfaffian ${\rm Pf} \in {\rm  I}^{2} (SO(4))$,
we put the canonical simplicial connection into $\displaystyle TPf( \theta )  = 2 \int_{0}^{1} {\rm Pf}(\theta \wedge (t \Omega + \frac{1}{2}t(t-1) [ \theta , \theta ]) )dt$ and integrate it along the standard simplex ${ \Delta }^{*}$, then we obtain the Chern-Simons form in $\Omega ^{3}(PSO(4)) $.

\begin{theorem}
The Chern-Simons form of the Euler class of $ESO(4) \rightarrow BSO(4)$ in $ \Omega ^{3} (PSO(4)) $ is the sum of the following
$T\bar{E}_{0,3}$ and $T\bar{E}_{1,2}$:
$$
\begin{CD}
0 \\
@AA{d''=-d}A \\
TE_{0,3} \in {\Omega}^{3} (SO(4))@>{d'}>>\bar{E}_{1,3}\\
@.@AA{d''=d}A\\
@.TE_{1,2} \in {\Omega}^{2} (SO(4) \times SO(4))@>{d'}>> \bar{E}_{2,2}
\end{CD}
$$
$$T\bar{E}_{0,3} =  \frac{-1}{96 \pi ^2} \sum_{\tau \in \mathfrak{S} _{4}}   {\rm sgn} (\tau)  \bigl((\theta_0)_{\tau (1) \tau(2)}(\theta_0^2) _{\tau (3) \tau(4)}\bigl),$$
$$ T\bar{E}_{1,2} =  \frac{-1}{64 \pi ^2} \sum_{\tau \in \mathfrak{S} _{4}}   {\rm sgn} (\tau) \bigl((\theta_0)_{\tau (1) \tau(2)}(\theta_1) _{\tau (3) \tau(4)}+(\theta_0) _{\tau (3) \tau(4)}(\theta_1) _{\tau (1) \tau(2)} \bigl).$$

\end{theorem}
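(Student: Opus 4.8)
My plan is to evaluate the transgression formula on the canonical simplicial connection $\theta=t_0\theta_0+\cdots+t_p\theta_p$ and to extract, simplicial degree by simplicial degree, the part that survives integration over $\Delta^p$. Writing $\varphi_s=s\Omega+\tfrac12 s(s-1)[\theta,\theta]$, which is exactly the curvature of the rescaled connection $s\theta$ (so $\varphi_0=0$, $\varphi_1=\Omega$), the transgression is $TPf(\theta)=2\int_0^1 {\rm Pf}(\theta\wedge\varphi_s)\,ds$, where $s$ is the transgression variable (the $t$ of the displayed formula, renamed to avoid clashing with the barycentric coordinates) and ${\rm Pf}$ denotes the symmetric polarization of the Pfaffian evaluated on the matrix-valued forms $\theta$ and $\varphi_s$. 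The crucial structural remark is that only one slot of this bilinear expression carries $\varphi_s$: since $\theta$ contains no $dt_i$ and only the piece $s\sum_i dt_i\wedge\theta_i$ of $\varphi_s$ involves $dt$, the form $TPf(\theta)$ has $dt$-degree at most one. Hence $I_\Delta(TPf(\theta))$ is supported only on $\Delta^0\times PSO(4)(0)$ and $\Delta^1\times PSO(4)(1)$, which is precisely why the answer consists of the two terms $T\bar E_{0,3}$ and $T\bar E_{1,2}$.

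For the simplicial degree zero part I restrict to $\Delta^0\times PSO(4)(0)=SO(4)$, where $\theta=\theta_0$ is the Maurer-Cartan form. The Maurer-Cartan equation gives $\Omega=d\theta_0+\tfrac12[\theta_0,\theta_0]=0$, so $\varphi_s=\tfrac12 s(s-1)[\theta_0,\theta_0]=s(s-1)\,\theta_0\wedge\theta_0$. Pulling the scalar out of the degree-two polynomial, using $\int_0^1 s(s-1)\,ds=-\tfrac16$, the Euler normalization $(1/2\pi)^2$ and ${\rm Pf}(A)=\tfrac18\sum_\tau {\rm sgn}(\tau)A_{\tau(1)\tau(2)}A_{\tau(3)\tau(4)}$, I obtain the constant $\tfrac{1}{4\pi^2}\cdot 2\cdot(-\tfrac16)\cdot\tfrac18=-\tfrac{1}{96\pi^2}$, which reproduces $T\bar E_{0,3}$ together with the matrix factor $(\theta_0)_{\tau(1)\tau(2)}(\theta_0^2)_{\tau(3)\tau(4)}$.

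The simplicial degree one part is the real computation. On $\Delta^1\times PSO(4)(1)$ I write $\theta=(1-t_1)\theta_0+t_1\theta_1$ and isolate the $dt$-component of $\varphi_s$, namely $s\,dt_1\wedge(\theta_1-\theta_0)$ (using $dt_0=-dt_1$). Substituting into the polarized Pfaffian and moving $dt_1$ to the front, the integrand becomes a multiple of $dt_1\wedge\theta_{\tau(1)\tau(2)}\wedge(\theta_1-\theta_0)_{\tau(3)\tau(4)}$. The key simplification is that the diagonal contributions $\sum_\tau {\rm sgn}(\tau)(\theta_i)_{\tau(1)\tau(2)}(\theta_i)_{\tau(3)\tau(4)}$ vanish identically: the two anticommuting $1$-form factors are exchanged by the even permutation $(1\,3)(2\,4)$, so antisymmetrization kills them. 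What remains are the cross terms, and the $t_1$-dependence collapses because $(1-t_1)+t_1=1$; combining the constants $2$ (transgression prefactor), $\tfrac18$ (Pfaffian), $\tfrac{1}{4\pi^2}$ (Euler) and $\int_0^1 s\,ds=\tfrac12$ gives $-1/32\pi^2$ on the single cross term, equivalently $-1/64\pi^2$ on the symmetrized pair $(\theta_0)_{\tau(1)\tau(2)}(\theta_1)_{\tau(3)\tau(4)}+(\theta_0)_{\tau(3)\tau(4)}(\theta_1)_{\tau(1)\tau(2)}$, matching $T\bar E_{1,2}$.

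Finally, the three coboundary relations indicated in the diagram, $d''T\bar E_{0,3}=0$, $d'T\bar E_{0,3}+d''T\bar E_{1,2}=\bar E_{1,3}$ and $d'T\bar E_{1,2}=\bar E_{2,2}$, do not require separate verification: they are the components of the single identity $(d'+d'')I_\Delta(TPf(\theta))=I_\Delta({\rm Pf}(\Omega))=\bar E$, valid because $I_\Delta$ is a chain map and $TPf$ is a genuine Chern-Simons transgression with $d(TPf(\theta))={\rm Pf}(\Omega)$; alternatively one may check them directly as in the proof of Theorem 3.5. I expect the main obstacle to be the sign and combinatorial bookkeeping in the degree one step: correctly evaluating the polarized Pfaffian on matrix-valued forms of mixed bidegree, verifying the vanishing of the diagonal ${\rm Pf}(\theta_i,\theta_i)$ terms after summation over $\mathfrak{S}_4$, and pinning down the orientation of $\int_{\Delta^1}$ so that the final antisymmetrization carries the correct overall sign.
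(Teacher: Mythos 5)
Your proposal is correct and follows essentially the same route as the paper: substitute the canonical simplicial connection into the standard transgression $TPf(\theta)=2\int_0^1 {\rm Pf}(\theta\wedge(t\Omega+\tfrac12 t(t-1)[\theta,\theta]))\,dt$, use $\Omega|_{\Delta^0\times PG(0)}=0$ and the explicit expression for $\Omega|_{\Delta^1\times PG(1)}$, and integrate over the simplex, with all constants ($-1/96\pi^2$ and $-1/64\pi^2$) coming out as in the paper. Your added observations --- that the $dt$-degree of the integrand is at most one (so only simplicial degrees $0$ and $1$ survive $I_\Delta$), that the diagonal terms vanish under antisymmetrization, and that the coboundary relations follow from $I_\Delta$ being a chain map --- are correct and merely make explicit what the paper leaves implicit.
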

\begin{proof}
Since $\Omega|_{{ \Delta }^{0} \times PG (0) }=0$ and $[ \theta_0 , \theta_0 ]=2\theta_0^2$,
$$T\bar{E}_{0,3}=\int_{0}^{1}t(t-1) dt\frac{1}{16 \pi ^2} \sum_{\tau \in \mathfrak{S} _{4}}   {\rm sgn} (\tau)  \bigl((\theta_0)_{\tau (1) \tau(2)}(\theta_0^2) _{\tau (3) \tau(4)} \bigl)$$
$$=  \frac{-1}{96 \pi ^2} \sum_{\tau \in \mathfrak{S} _{4}}   {\rm sgn} (\tau)  \bigl((\theta_0)_{\tau (1) \tau(2)}(\theta_0^2) _{\tau (3) \tau(4)}\bigl).$$
Since $\Omega|_{{ \Delta }^{1} \times PG (1) }=-dt_1 \wedge (\theta_0-\theta_1)-t_0t_1(\theta_0-\theta_1)^2$,
$$ T\bar{E}_{1,2} =\int_{0}^{1}t dt\frac{(-1)}{16 \pi ^2} \sum_{\tau \in \mathfrak{S} _{4}}{\rm sgn} (\tau) \bigl(
 \int_{0}^{1} (t_0\theta_0+t_1\theta_1)_{\tau (1) \tau(2)}\wedge dt_1 \wedge (\theta_0-\theta_1)_{\tau (3) \tau(4)}  \bigl)$$
$$=  \frac{-1}{64 \pi ^2} \sum_{\tau \in \mathfrak{S} _{4}}   {\rm sgn} (\tau) \bigl((\theta_0)_{\tau (1) \tau(2)}(\theta_1) _{\tau (3) \tau(4)}+(\theta_0) _{\tau (3) \tau(4)}(\theta_1) _{\tau (1) \tau(2)} \bigl).$$
\end{proof}

National Institute of the Technology, Akita College, 1-1, Iijima Bunkyo-cho, Akita-shi, Akita-ken, Japan. \\
e-mail: nysuzuki@akita-nct.ac.jp
\end{document}